\newcommand{\Z}{\mathbb{Z}}
\newcommand{\Q}{\mathbb{Q}}
\newcommand{\R}{\mathbb{R}}
\newcommand{\C}{\mathbb{C}}
\renewcommand{\H}{\mathcal{H}}
\newcommand{\Ab}{\mathcal{A}}
\newcommand{\tr}{\mathrm{tr}\,}
\newcommand{\SL}{\mathrm{SL}}
\newcommand{\GL}{\mathrm{GL}}
\newcommand{\Sp}{\mathrm{Sp}}
\newcommand{\GSp}{\mathrm{GSp}} %G=\GSp_4
\newcommand{\G}{\Gamma}
\newcommand{\Gp}{\Gamma^{\mbox{\tiny{para}}}}
\newcommand{\disc}{\mathrm{disc}\,}
\newcommand{\cont}{\mathrm{cont}\,}
\newcommand{\ti}{^{\times}}
\renewcommand{\(}{\left(} \renewcommand{\)}{\right)}
\newcommand{\mat}[4]{{\setlength{\arraycolsep}{0.5mm}\left( \begin{array}{cc}#1&#2\\#3&#4\end{array}\right)}} %def of 4 by 4 matrix
\newcommand{\T}[1]{\,{}^t\! {{#1}}} %transpose written on the left
\newcommand{\hsp}[1]{\hspace{-#1 cm}}
\theoremstyle{plain}
\newtheorem*{thm}{Theorem}	%tresc twierdzenia pisana kursywa
\newtheorem{theorem}{Theorem}[section]
\newtheorem{lem}{Lemma}
\newtheorem{prop}[lem]{Proposition}
\newtheorem*{par_con}{Paramodular Conjecture}
\theoremstyle{definition}				%wszystko pisane w roman style
\begin{document}
\author{Jolanta Marzec}
	
\title{Non-vanishing of fundamental Fourier coefficients of paramodular forms}

\begin{abstract}
We prove that paramodular newforms of odd square-free level have infinitely many non-zero fundamental Fourier coefficients.
\end{abstract}

\maketitle

\section{Introduction}
The purpose of this article is to shed some light on Fourier coefficients of cuspidal paramodular forms. Paramodular forms are Siegel modular forms of degree $2$ that are invariant under the action of the paramodular group
$$\Gp (N):=\Sp_4(\Q )\cap\(\begin{array}{cccc} \Z & N\Z & \Z & \Z \\ \Z & \Z & \Z & \Z/N\\ \Z & N\Z & \Z & \Z \\ N\Z & N\Z & N\Z & \Z \\ \end{array}\)$$
for some natural number $N$. 

One of the most natural questions one may ask about a Siegel modular form $F$ of degree $2$ is its determination by certain `useful' subset of Fourier coefficients. We are interested in an infinite subset
$$\{ a(F,T):\disc T=\mbox{ fundamental discriminant}\}$$
of fundamental Fourier coefficients, which plays an important role in the theory of Bessel models and $L$-functions. For instance, in certain cases, non-vanishing of a fundamental Fourier coefficient  of a cuspidal Siegel modular form $F$ is equivalent to existence of a global Bessel model of fundamental type (cf. \cite[Lemma 4.1]{saha_det_by_fund}) and is used to show analytic properties and special value results for $L$-functions for $\GSp_4\times\GL_2$ associated to various twists of $F$ (e.g. \cite{fur}, \cite{pitsch}, \cite{lfshort}, \cite{sah2}). It is also known \cite{sahafund} that fundamental Fourier coefficients determine cuspidal Siegel modular forms of degree $2$ of full level. Our result extends previous work by Saha \cite[Theorem 3.4]{saha_det_by_fund}, \cite[Theorem 1]{sahafund} and Saha, Schmidt \cite[Theorem 2]{sahaschmidt} in case of the levels $\Sp_4(\Z)$ and $\G_0^{(2)}(N)$.

\begin{thm}
Let $F\in S_k(\Gp(N))$ be a non-zero paramodular cusp form of an arbitrary integer weight $k$ and odd square-free level $N$ which is an eigenfunction of the operators $T(p)+T(p^2)$ for primes $p\nmid N$, $U(p)$ for $p\mid N$ and $\mu_N$. Then $F$ has infinitely many non-zero fundamental Fourier coefficients.
\end{thm}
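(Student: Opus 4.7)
My approach follows the blueprint of Saha \cite{sahafund} and Saha--Schmidt \cite{sahaschmidt} for levels $\Sp_4(\Z)$ and $\G_0^{(2)}(N)$, adapted to the paramodular setting: reduce non-vanishing of fundamental Fourier coefficients of $F$ to non-vanishing of fundamental-index Fourier coefficients of half-integral weight forms extracted from $F$ via Fourier--Jacobi expansion and theta decomposition. First, I would expand $F$ at the standard cusp as a Fourier--Jacobi series
$$F\begin{pmatrix} \tau & z \\ z & \tau' \end{pmatrix} = \sum_{m} \phi_m(\tau, z)\, e^{2\pi i m \tau'},$$
with each $\phi_m$ a Jacobi cusp form of weight $k$ and index $m$, on a Jacobi group inheriting its level structure from $\Gp (N)$. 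Since $F \ne 0$, some $\phi_m \ne 0$, and the Hecke hypotheses (in particular being an eigenform of $U(p)$ for $p \mid N$ and of $\mu_N$) should let me select such $m$ of a convenient shape, for example so that the induced level on the corresponding half-integral weight forms is as small as possible.

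Next, I would apply the theta decomposition
$$\phi_m(\tau, z) = \sum_{\mu \in \Z/2m\Z} h_{m,\mu}(\tau)\, \theta_{m,\mu}(\tau, z)$$
to obtain a vector of cusp forms of half-integral weight $k - \tfrac{1}{2}$ on an appropriate congruence subgroup, with at least one component $h_{m,\mu_0} \ne 0$. A direct computation then relates the fundamental Fourier coefficient $a(F, T)$ with $T = \begin{pmatrix} n & r/2 \\ r/2 & m \end{pmatrix}$ and $\disc T$ fundamental to the $|\disc T|$-th Fourier coefficient of $h_{m,\mu}$, where $\mu$ depends on $r \bmod 2m$.

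Finally, I would invoke a non-vanishing theorem for half-integral weight cusp forms in the style of Bruinier--Ono or Saha: a non-zero form lying in a Kohnen-type plus subspace has infinitely many non-vanishing Fourier coefficients at fundamental discriminants. The hypothesis that $F$ is an eigenform of $T(p) + T(p^2)$ for $p \nmid N$ should, via the Eichler--Zagier correspondence and the Shimura lift, translate into Hecke eigenform properties of the $h_{m,\mu_0}$ that place it in such a subspace. The main obstacle is the careful bookkeeping between three worlds: the paramodular group with its several cusps and Atkin--Lehner involutions; the Jacobi forms of varying level arising at each cusp; and the congruence subgroups on which the $h_{m,\mu}$ live. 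Concretely, arranging that a suitable non-zero $\phi_m$ is chosen whose theta components land in a subspace where a strong non-vanishing result is available will constitute the bulk of the work; this is where the odd square-free assumption on $N$ and the full battery of Hecke and Atkin--Lehner eigen-hypotheses must be fully exploited.
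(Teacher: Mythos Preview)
Your outline follows the same overall blueprint as the paper, but it glosses over precisely the step that constitutes the paper's main technical contribution and that does not follow from general principles: the existence of a non-zero \emph{primitive} Fourier coefficient of $F$. For levels $\Sp_4(\Z)$ and $\G_0^{(2)}(N)$ this was available from results of Zagier, Yamana, and Ibukiyama--Katsurada, but for paramodular forms it was previously unknown. You write that the Hecke hypotheses ``should let me select such $m$ of a convenient shape,'' but this is exactly the hard point. What the paper does is compute explicitly the action of $U(p)$ (for $p\mid N$) on Fourier coefficients using the Roberts--Schmidt coset representatives, combine it with Evdokimov's formula for $T(p)+T(p^2)$ (for $p\nmid N$), and then argue by minimality of the discriminant: if $a(F,S')\neq 0$ and $p\mid\cont S'$, the eigenform relations produce a non-zero coefficient of strictly smaller discriminant, a contradiction. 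Without this, you cannot guarantee a non-zero Jacobi coefficient $\phi_m$ with $m$ square-free, and without square-free index Skoruppa's injectivity (which is only stated for square-free $m$) does not apply, so your half-integral weight form may vanish or live on a level to which the available non-vanishing theorems do not apply.

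A few smaller points. First, the Jacobi forms in the Fourier--Jacobi expansion of a paramodular form are of level \emph{one}; the ``varying level'' and ``several cusps'' bookkeeping you anticipate is not needed. What is needed instead is to pass, via the $\mu_N$-symmetry of Fourier coefficients and Weber's theorem on primes represented by primitive binary quadratic forms, from the primitive coefficient to a non-zero $\phi_{Np}$ with $p$ an odd prime not dividing $N$; this is how the $\mu_N$-eigenform hypothesis is actually used. Second, the non-vanishing input on the half-integral weight side is Saha's theorem for $k\ge 3$ together with Li's recent theorem for the delicate weight $3/2$ case (i.e.\ $k=2$), and one must separately check that the image $h_\chi$ is not a unary theta series; this is handled via the Skoruppa--Zagier lift preserving cuspidality. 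Neither Saha's nor Li's theorem requires $h_\chi$ to be a Hecke eigenform, so transferring Hecke eigenproperties to $h_\chi$ is not the mechanism that makes the argument work.
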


\noindent In particular, our theorem holds for paramodular newforms in the sense of \cite{roberts-schmidt06}. 

Paramodular forms were already an object of interest of Siegel \cite{siegel} but have become a true centre of attention within last ten years when Brumer and Kramer \cite{BKpara} conjectured an extension of the modularity theorem to abelian surfaces, known now as the paramodular conjecture.

\begin{par_con}
There is a one to one correspondence between isogeny classes of abelian surfaces $\Ab/\Q$ of conductor $N$ with $\mathrm{End}_{\Q}\Ab =\Z$ and (up to scalar multiplication) weight $2$ cuspidal paramodular newforms $F$ that are not Gritsenko lifts and have rational Hecke eigenvalues. Furthermore, the Hasse-Weil $L$-function of $\Ab$ is equal to the spinor $L$-function of $F$. 
\end{par_con}
%mention computational evidence instead?
In subsequent years the paramodular conjecture has been supported by an extensive computational evidence (e.g. \cite{BPY}, \cite{BKpara}, \cite{PY}). Moreover, it was proved in the case when $\Ab$ is the Weil restriction of an elliptic curve with respect to real quadratic extension of $\Q$ (thanks to \cite{robjjl} and \cite{FHSmodular}), and in \cite{BDPSlift} some progress was made towards Weil restrictions with respect to imaginary quadratic extensions of $\Q$.

The proof of the above theorem consists of two parts and follows the strategy used in \cite{saha_det_by_fund}, \cite{sahafund}, \cite{sahaschmidt}. First we show that $F$ has a non-zero primitive Fourier coefficient. This allows us to construct a non-zero modular form of half-integral weight which satisfies the assumptions of Theorems \ref{thm:saha_half-int}, \ref{thm:Li} and therefore has infinitely many non-zero Fourier coefficients indexed by square-free numbers. Then the result follows from the relation between Fourier coefficients of both modular forms. 

Even though this recipe seems to be fairly simple, finding a non-zero primitive Fourier coefficient for a paramodular form is harder than it was the case for the levels $\Sp_4(\Z), \G_0^{(2)}(N)$, where its existence was basically guaranteed by theorems due to Zagier \cite{zagier81}, Yamana \cite{yamana09} or Ibukiyama, Katsurada \cite{ibukat12}, and for paramodular forms was so far unknown. To deal with this problem we assume that $F$ is an eigenform of standard Hecke operators and compute an action of the $U(p)$ operator on Fourier coefficients of $F$. This computation relies on an explicit set of coset representatives written by Roberts, Schmidt \cite{NF}. To the best of our knowledge, an expression for the action of the $U(p)$ operator on paramodular forms had not been written down previously; thus this part of our paper may be of independent interest.

Finally, we note that we are able to prove our main result for paramodular cusp forms of all weights $k\ge 2$ (there do not exist paramodular cusp forms of weight $k\le 1$). This is in contrast to the results of Saha \cite{saha_det_by_fund} and Saha, Schmidt \cite{sahaschmidt} where the corresponding results for forms with respect to the Siegel type congruence subgroup are proved only for weights $k>2$. The fact that we can handle the weight $k=2$ case is especially satisfying because it is precisely these forms that partake in the paramodular conjecture. Our treatment of this case depends on recent work of Li \cite{li_half-integral} on Fourier coefficients of weight $3/2$ classical cusp forms.
\section{Preliminaries}
\subsection{Paramodular forms}
A holomorphic function $F\colon\H_2\to\C$ defined on the Siegel upper half-space 
$$\H_2=\{ X+iY:X,Y\in M_2(\R )\mbox{ symmetric},Y \mbox{ positive definite}\}$$ 
is a \emph{paramodular form} of weight $k$ and level $N$ if 
$$F|_k \gamma (Z)=F(Z)\quad\mbox{ for any} \quad\gamma\in\Gp (N)$$
according to the action 
\begin{equation}\label{action}
F|_k \(\begin{smallmatrix} A & B\\ C & D \end{smallmatrix}\) (Z):=\mu(\(\begin{smallmatrix} A & B\\ C & D \end{smallmatrix}\))^k \det(CZ+D)^{-k}F((AZ+B)(CZ+D)^{-1}),
\end{equation}
where 
$$\Gp (N):=\Sp_4(\Q )\cap\(\begin{array}{cccc} \Z & N\Z & \Z & \Z \\ \Z & \Z & \Z & \Z/N\\ \Z & N\Z & \Z & \Z \\ N\Z & N\Z & N\Z & \Z \\ \end{array}\),$$
and the multiplier $\mu\colon\GSp_4(\Q )\to\Q^{\times}$ is defined in accordance with the definition of the group
$$\GSp_4(\Q ):=\{ g\in\GL_4(\Q)\colon\T{g}
\(\begin{smallmatrix} & & 1 &\\ & & & 1\\ -1& & &\\ & -1 & &
 \end{smallmatrix}\) g=\mu(g)\(\begin{smallmatrix} & & 1 &\\ & & & 1\\ -1& & &\\ & -1 & &
 \end{smallmatrix}\)\} .$$
If additionally $F$ is a cusp form, which we denote by $F\in S_k(\Gp(N))$, it admits a Fourier expansion
\begin{equation}\label{eq:Fourier_expansion}
F(Z)=\sum_{\substack{T=\T{T}, T>0 \\ \mbox{\tiny{half-integral}}}} a(F,T)e(\tr (TZ)),\qquad e(x):=e^{2\pi ix}.
\end{equation}
Moreover, it is easy to see that the Fourier coefficients $a(F,T)$ satisfy
\begin{equation}\label{eq:G^0(N)-equiv_of_Fourier_coeff}
a(F,\T{A}TA)=a(F,T)\quad\mbox{for all } A\in\G^0(N):=\SL_2(\Q)\cap\(\begin{smallmatrix} \Z & N\Z \\ \Z & \Z\end{smallmatrix}\) .
\end{equation}

If we expand \eqref{eq:Fourier_expansion} in terms of $Z=\mat{\tau}{z}{z}{\tau '}$ and $T=\mat{n}{r/2}{r/2}{m}$, we obtain a Fourier-Jacobi expansion of $F$,
$$F(Z)=\sum_{\substack{m> 0\\ 4nm-r^2> 0}} a(F,\(\begin{smallmatrix} n & r/2\\ r/2 & m\end{smallmatrix}\) )e(n\tau )e(rz)e(m\tau ' )=\sum_{m\geq 0, N|m}\phi_m (\tau ,z)e(m\tau ')\, ,$$
where $\phi_m$ is a Jacobi form of weight $k$, index $m$ and level $1$. The condition $N|m$ follows from the definition of $F$ and  comparing the coefficients in the equality $F(Z)=F(Z+\(\begin{smallmatrix} 0 & \\ & 1/N\end{smallmatrix}\))$. The latter statement characterizing $\phi_m$ can be proven along the lines of the proof of Theorem 6.1 in \cite{eichzag}, with $\G_2:=\Gp (N)$.

In this article we are particularly interested in coefficients $a(F,\!\(\begin{smallmatrix} n & r/2\\ r/2 & m\end{smallmatrix}\) )$, where $d=\disc\(\begin{smallmatrix} n & r/2\\ r/2 & m\end{smallmatrix}\) :=r^2-4nm$ is a fundamental discriminant,
that is either $d$ is a square-free number congruent to $1\pmod{4}$ or $d=4d'$ and $d'$ is square-free and congruent to $2,3\pmod{4}$; we call them \emph{fundamental Fourier coefficients}. In particular, such coefficients are \emph{primitive}, i.e. $\cont\(\begin{smallmatrix} n & r/2\\ r/2 & m\end{smallmatrix}\) :=\gcd(n,r,m) =1$.

\subsection{Hecke operators}
As in the theory of classical modular forms, one can define Hecke operators on the space of Siegel modular forms of degree $2$ (\cite{andzhu}). The ones of special interest to us are
$$T(p):=\Gp (N)\(\begin{smallmatrix} 1 &  &  &  \\  & 1 &  &  \\  &  & p &  \\  &  &  & p \\ \end{smallmatrix}\)\Gp (N)$$
and
$$T(p^2):=\Gp (N)\(\begin{smallmatrix} 1 &  &  &  \\  & p &  &  \\  &  & p^2 &  \\  &  &  & p \\ \end{smallmatrix}\)\Gp (N)$$
for $p\nmid N$, and 
$$U(p):=\Gp (N)\(\begin{smallmatrix} 1 &  &  &  \\  & 1 &  &  \\  &  & p &  \\  &  &  & p \\ \end{smallmatrix}\)\Gp (N)$$
for $p\mid N$. They act on the space of Siegel modular forms of degree $2$ according to the following rule. If $\Gp (N)\alpha\Gp (N) =\bigsqcup_i\Gp (N)\alpha_i$ is a coset decomposition, then 
$$F|_k\Gp (N)\alpha\Gp (N)=F|_k\bigsqcup_i\Gp (N)\alpha_i =\sum_i F|_k  \alpha_i .$$
We will write down the action of the operators $U(p)$ and $T(p)+T(p^2)$ explicitly in Lemma \ref{lem:U(p)-action} and Proposition \ref{Evdokimov}.

Another important operator is the Fricke involution 
$$\mu_N:=\frac{1}{\sqrt{N}}\(\begin{smallmatrix}  & N &  &  \\ -1 &  &  &  \\  &  &  & 1 \\  &  & -N &  \\ \end{smallmatrix}\) .$$
It normalizes $\Gp (N)$, and since $\mu_N^2=-I_4$, the space $S_k(\Gp(N))$ decomposes into $\mu_N$-eigenspaces $S_k(\Gp(N))^{\pm}$ with eigenvalues $\pm 1$. If $F\in S_k(\Gp(N))$ satisfies $F|_k\mu_N=\epsilon F$, then the Fourier coefficients of $F$ possess the symmetry 
\begin{equation}\label{eq:Fricke_on_coeff}
a(F,\(\begin{smallmatrix} n & r/2\\ r/2 & m\end{smallmatrix}\) )=\epsilon a(F,\(\begin{smallmatrix} m/N & -r/2\\ -r/2 & nN\end{smallmatrix}\) ) .\end{equation}

We will be interested only in the paramodular forms that are eigenfunctions of the aforementioned operators $T(p), T(p^2), U(p)$ and $\mu_N$. This includes paramodular newforms defined by Roberts and Schimdt in \cite{roberts-schmidt06}.

\subsection{Modular forms of half-integral weight}
We recall now a few useful facts concerning modular forms of half-integral weight. The set of such modular forms of weight $k$, level $4N$ and twisted by a character $\chi$ will be denoted by $M_k^{(1)}(4N,\chi)$, and $S_k^{(1)}(4N,\chi)$ will denote the subset of cusp forms. 

Let $\phi_m (\tau ,z)$ be a Jacobi form coming from a Fourier-Jacobi expansion as above. We can also write it as
$$\phi_m (\tau ,z)=\sum_{0\leq\mu<2m}h_{\mu}(\tau )\sum_{\substack{r\in\Z\\ r\equiv\mu\!\!\!\!\pmod{2m}}}e\( {r^2\over 4m}\tau \) e(rz)\, ,$$
where $$h_{\mu}(\tau )=\sum_{\substack{D\geq 0\\ D\equiv -\mu^2\!\!\!\!\pmod{4m}}} a(F,\mat{{D+\mu^2\over 4m}}{\mu /2}{\mu /2}{m} ) e\( {D\over 4m}\tau \)\, .$$
Note that the matrix $\mat{{D+\mu^2\over 4m}}{\mu /2}{\mu /2}{m}$ has a discriminant $-D$.

We recall first (a special case of) a theorem due to Skoruppa, which gives us a tool to construct modular forms of half-integral weight out of Fourier-Jacobi expansion of paramodular forms. This construction is slightly different for different subspaces of Jacobi forms. For the sake of this paper it is enough to know that the space $J_{k,m}$ of Jacobi forms of weight $k$ and index $m$ can be factorised in a natural way as
$$J_{k,m}=\oplus_{\substack{f,d>0, fd^2|m\\ f\mbox{\tiny{square-free}}}} J_{k,m}^{d,f};$$
we refer an interested reader to \cite{skoruppathesis}, Satz 2.3 or p. 93 for details.

\begin{theorem}[Skoruppa; \cite{skoruppathesis}, Satz 4.1]\label{thm:skoruppa}
Let $k, m$ be natural numbers, $m$ square-free. Let $\chi=\prod_{p|F}\chi_p$ be a primitive Dirichlet character modulo $F$ such that $F|2m$ and $\chi(-1)=(-1)^k$, and denote by $f$ a product of those primes $p|F$ for which $\chi_p$ is odd. Then consider a map $Z^{\chi}_{k,m}$
that sends a Jacobi form $\phi_m (\tau ,z)$ of weight $k$, index $m$ and level $1$ to 
$$h(\tau ):=\sum_{D\geq 0}\(\sum_{\substack{0\leq\mu<2m\\ D\equiv -\mu^2\!\!\!\!\pmod{4m}}} \chi(\mu) a(F,\mat{{D+\mu^2\over 4m}}{\mu /2}{\mu /2}{m} )\) e\( D\tau \) ,$$ 
which lies in $M_{k-1/2}^{(1)}(4\mathrm{lcm}(m,F^2),\chi)$.
The map $Z^{\chi}_{k,m}$ satisfies the following properties:
\begin{itemize}
\item $Z^{\chi}_{k,m}\(J^{1,f'}_{k,m}\) =\{ 0\}$ if $f'\neq f$,
\item the restriction of $Z^{\chi}_{k,m}$ to $J^{1,f}_{k,m}$ is injective,
\item sends cusp forms to cusp forms and Eisenstein series to Eisenstein series;
\item commutes with Hecke operators $T_l$ with $\gcd(l,m)=1$.
\end{itemize}
\end{theorem}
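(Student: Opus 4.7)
The plan is to exploit the theta decomposition of Jacobi forms already displayed in the excerpt. Writing
$$\phi_m(\tau,z) \;=\; \sum_{\mu \!\!\pmod{2m}} h_\mu(\tau)\,\theta_{m,\mu}(\tau,z),\qquad \theta_{m,\mu}(\tau,z) \;=\; \sum_{r\equiv\mu\!\!\pmod{2m}} e\!\left(\tfrac{r^2\tau}{4m}\right) e(rz),$$
the Jacobi transformation law for $\phi_m$ forces the vector $(h_\mu)_{\mu\in\Z/2m\Z}$ to transform under $\SL_2(\Z)$ as a vector-valued modular form of weight $k-1/2$ with the representation dual to the Weil representation attached to the discriminant form $(\Z/2m\Z,\,\mu\mapsto -\mu^2/4m)$. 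This is the classical Eichler--Zagier observation and it is the essential input.

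Next I would form the scalar combination
$$h(\tau) \;:=\; \sum_{\mu \!\!\pmod{2m}} \chi(\mu)\, h_\mu(4m\tau),$$
which, upon substituting back the definition of $h_\mu$, matches the $h$ stated in the theorem. To show $h\in M_{k-1/2}^{(1)}(4\,\mathrm{lcm}(m,F^2),\chi)$ I would verify the transformation law on generators of $\G_0(4\,\mathrm{lcm}(m,F^2))$. Invariance under $\tau\mapsto\tau+1$ is immediate from the $q$-expansion. For a matrix $\gamma=\mat{a}{b}{c}{d}$ with $4\,\mathrm{lcm}(m,F^2)\mid c$, I would plug the explicit Weil formula for $\theta_{m,\mu}|\gamma$ into $h$; the inner double sum collapses into a Gauss sum of $\chi$, and primitivity of $\chi$ together with $F\mid 2m$ is precisely what is needed for the output to equal $\chi(d)\,(c\tau+d)^{k-1/2}h(\tau)$. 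The assumption $\chi(-1)=(-1)^k$ is exactly what matches the sign of $(c\tau+d)^{k-1/2}$ against $\chi(d)$.

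The four listed properties I would then verify individually. The decomposition $J_{k,m}=\bigoplus_{d,f} J_{k,m}^{d,f}$ is obtained by simultaneously diagonalising the Atkin--Lehner-type involutions $W_p$ on Jacobi forms for $p\mid m$; for square-free $m$ only $d=1$ occurs, and the parity of each local factor $\chi_p$ selects which $W_p$-eigenvalue survives the sum $\sum_\mu \chi(\mu)h_\mu$, so an orthogonality-of-characters argument both annihilates $J_{k,m}^{1,f'}$ for $f'\neq f$ and yields injectivity on $J_{k,m}^{1,f}$. Preservation of cusp forms follows because primitivity of $\chi$ forces the $D=0$ Fourier coefficient to vanish. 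Hecke equivariance for $T_\ell$ with $\gcd(\ell,m)=1$ reduces to a term-by-term comparison of the Jacobi Hecke action on $\phi_m$ with the Shimura-style action on $h$.

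The step I expect to be most delicate is the precise level computation: one has to match $4\,\mathrm{lcm}(m,F^2)$ against the denominators produced by the Weil-representation transformations, particularly at primes $p\mid F$ with $p^2\nmid 2m$, where the extra $F^2$ factor is needed to accommodate the conductor of $\chi$. This is a careful book-keeping exercise rather than a conceptual difficulty, and it is essentially where Skoruppa's thesis invests its technical effort.
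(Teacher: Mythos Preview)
The paper does not prove this theorem. It is quoted as Satz~4.1 of Skoruppa's thesis \cite{skoruppathesis} and used as a black box in the proof of the main theorem; there is no argument for it in the paper itself. So there is nothing to compare your proposal against within the paper.

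That said, your outline is broadly the right strategy and is in the spirit of Skoruppa's original argument: start from the theta decomposition $\phi_m=\sum_\mu h_\mu\theta_{m,\mu}$, use the vector-valued transformation of $(h_\mu)$ under the Weil representation, and then contract against $\chi$ to produce a scalar form whose level and nebentypus are governed by $m$ and the conductor of $\chi$. Two points in your sketch deserve sharpening before they would count as a proof. First, your justification that cusp forms go to cusp forms (``primitivity of $\chi$ forces the $D=0$ coefficient to vanish'') only shows vanishing at the cusp $\infty$; you still need to control the behaviour at the other cusps of $\Gamma_0(4\,\mathrm{lcm}(m,F^2))$, which is exactly where the vector-valued transformation law and the structure of Jacobi cusp forms are used. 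Second, the claim that an ``orthogonality-of-characters argument'' simultaneously kills $J_{k,m}^{1,f'}$ for $f'\neq f$ and gives injectivity on $J_{k,m}^{1,f}$ is correct in outline but relies on the precise description of the pieces $J_{k,m}^{1,f}$ in terms of the sign patterns of $h_\mu\mapsto h_{-\mu}$ under the local involutions; you should state explicitly which symmetry of the $h_\mu$ characterises $J_{k,m}^{1,f}$ and then show that $\sum_\mu\chi(\mu)h_\mu$ picks out exactly that sign pattern.
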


The next two theorems give us an insight into the nature of Fourier coefficients of modular forms of half-integral weight. Because the coefficients $a(h_{\mu}, n)$ of the $h_{\mu}$ constructed above are defined in terms of the Fourier coefficients of a Siegel modular form, the theorems below will be crucial in our investigations of the Fourier coefficients of paramodular forms. The second one is especially important as it will allow us to reach the paramodular forms that occur in the statement of the paramodular conjecture.

\begin{theorem}[Saha; \cite{sahafund}]\label{thm:saha_half-int}
Let $N$ be a positive integer that is divisible by $4$ and $\chi\colon (\Z /N\Z )\ti\to\C\ti$ be a character. Write $\chi=\prod_{p|N}\chi_p$ and assume that the following conditions are satisfied: 
\begin{enumerate}
\item[i)] $N$ is not divisible by $p^3$ for any prime $p$,
\item[ii)] if $p$ is an odd prime such that $p^2|N$, then $\chi_p\neq 1$.
\end{enumerate}
For some $k\geq 2$, let $f\in S_{k+1/2}^{(1)}(N,\chi )$ be such that $a(f,d)=0$ for all but finitely many odd square-free integers $d$. Then $f=0$.
\end{theorem}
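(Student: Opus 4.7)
The plan is to use the Shimura correspondence to lift $f$ to an integral weight form and then invoke non-vanishing results for quadratic twists of $L$-functions. The space $S_{k+1/2}^{(1)}(N,\chi)$ is finite-dimensional, and the Hecke operators $T(p^2)$ for $p\nmid N$ are commuting normal operators with respect to the Petersson inner product, so I would simultaneously diagonalize them and decompose $f=\sum_i f_i$ into Hecke eigenforms with pairwise distinct eigensystems $\{\lambda_p^{(i)}\}$. Using the standard Hecke recursion that expresses $a(f_i,p^2 d)$ as a linear combination of $a(f_i,d)$ with coefficients involving $\lambda_p^{(i)}$ and a quadratic character evaluated at $d$ (valid for squarefree $d$ coprime to $p$), together with the linear independence of distinct eigensystems, the vanishing hypothesis $a(f,d)=0$ for almost all odd squarefree $d$ transfers to each $f_i$ separately; it thus suffices to treat a single Hecke eigenform.

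Assume now that $f$ is a Hecke eigenform. Applying the Shimura correspondence produces a Hecke eigenform $F$ of integral weight $2k$ and character $\chi^2$, whose $T(p)$-eigenvalues equal the $T(p^2)$-eigenvalues of $f$ for $p\nmid N$. The hypotheses $p^3\nmid N$ and $\chi_p\neq 1$ for odd $p$ with $p^2\mid N$ are precisely what is needed for the Shimura map to be injective on the relevant subspace, so that $F\neq 0$. A Waldspurger-type formula then expresses $|a(f,t)|^2$, for odd squarefree $t$ coprime to $N$, as a non-zero explicit constant times the central critical value $L(k,F\otimes\chi_t)$, where $\chi_t$ is the quadratic character associated to $t$ and $\chi$. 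The vanishing hypothesis forces $L(k,F\otimes\chi_t)=0$ for all but finitely many such $t$, which contradicts classical non-vanishing results for quadratic twists of $L$-functions of integral-weight newforms (Bump--Friedberg--Hoffstein, Murty--Murty, and Friedberg--Hoffstein for weight $2$), since these produce infinitely many squarefree $t$ in any fixed congruence class with $L(k,F\otimes\chi_t)\neq 0$.

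The main obstacle is ensuring that the Shimura--Waldspurger machinery operates cleanly at primes dividing $N$: both the injectivity of the Shimura lift on the subspace in question and the non-vanishing of the local proportionality constants in Waldspurger's formula are sensitive to the local representation-theoretic data at ramified primes, and the precise conditions $p^3\nmid N$ and $\chi_p\neq 1$ when $p^2\mid N$ are imposed to exclude exactly the pathological local cases where these can degenerate. A secondary technical point is that the non-vanishing results for $L(k,F\otimes\chi_t)$ must be applied with $t$ restricted to odd squarefree integers in prescribed congruence classes modulo $8N$; this strengthening of the classical theorems is available in the literature but requires care to quote correctly.
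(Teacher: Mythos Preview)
This theorem is not proved in the present paper; it is quoted verbatim from Saha's article \cite{sahafund} and used as a black box. There is therefore no proof here against which to compare your attempt.

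That said, your sketch differs substantially from Saha's actual argument in \cite{sahafund}. Saha does begin, as you do, by reducing to a single Hecke eigenform. But from that point on he does \emph{not} invoke Waldspurger's formula together with Bump--Friedberg--Hoffstein type non-vanishing for individual twists. Instead he studies the Dirichlet series $\sum_{d\ \text{odd squarefree}} |a(f,d)|^2 d^{-s}$ directly, relates it via a Rankin--Selberg computation to $L(s,\mathrm{sym}^2 \mathrm{Sh}(f))$ (where $\mathrm{Sh}(f)$ is the Shimura lift), and shows that this average has a pole, which is incompatible with almost-all vanishing. The conditions $p^3\nmid N$ and $\chi_p\neq 1$ for odd $p$ with $p^2\mid N$ enter in Saha's argument to control the shape of the newform underlying $\mathrm{Sh}(f)$ and the local factors in the Rankin--Selberg unfolding, not to guarantee injectivity of a Shimura map as you suggest.

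Your route is not absurd in principle, but it has real gaps as written. First, your reduction to eigenforms is too quick: the vanishing hypothesis concerns only \emph{squarefree} indices, whereas separating the $f_i$ via Hecke relations requires looking at indices $p^{2n}d$, which are not squarefree; the passage from ``$a(f,d)=0$ for almost all squarefree $d$'' to the same statement for each $f_i$ needs an additional argument. Second, the explicit Waldspurger/Kohnen--Zagier formula with nonzero local constants is not available off the shelf at the level of generality you need here (arbitrary cube-free $N$, arbitrary $\chi$ subject to the stated conditions); you would have to verify non-vanishing of the local Whittaker/Bessel factors at every ramified prime, and this is precisely where the hypotheses on $N$ and $\chi$ would have to be exploited---you assert this but do not carry it out. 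Third, the non-vanishing theorems you cite produce infinitely many $t$ with $L(k,F\otimes\chi_t)\neq 0$, but the Waldspurger formula only relates $|a(f,t)|^2$ to this $L$-value up to a local factor that could in principle kill it for the specific $t$ produced; ruling this out is again a local computation you have not done. Saha's averaging approach sidesteps all of these local issues.
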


One of the reasons why Theorem \ref{thm:saha_half-int} excludes the case $k=1$ is because the statement does not hold for some $f\in S_{3/2}^{(1)}(N,\chi )$. Special care was needed to show that it is enough to exclude those modular forms $f$ whose Shimura lift is not a cusp form, that is, theta series of the form
\begin{equation}\label{theta}
f(z)=\sum_{m\geq 1} m\psi(m)e(m^2tz)\in S_{3/2}^{(1)}(4r^2t,\psi_t),
\end{equation}
where $\psi$ is an odd character modulo $r$, $t$ is a positive integer and $\psi_t(d):=\psi(d)\({t\over d}\)\( -1\over d\)$.

\begin{theorem}[Li; \cite{li_half-integral}]\label{thm:Li}
Let $r$ and $t$ be odd, square-free and relatively prime integers, and $\chi_r, \chi_{4t}$ characters modulo $r$ and $4t$ respectively. Suppose $\chi_r$ is primitive. Then for any integer
$k\geq 1$, any finite set of primes $\mathcal{S}$, and any nonzero cusp form $f(z)=\sum_{n\geq 1} a(f,n)e(nz)\in S_{k+1/2}^{(1)}(4r^2t,\chi_r\chi_{4t})$, which is not of the form \eqref{theta}, there exist infinitely many square-free integers $D$ such that $a(f,D)\neq 0$ and $\gcd(D,l)=1$ for all $l\in\mathcal{S}$.
\end{theorem}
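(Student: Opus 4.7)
The plan is to translate the question about non-vanishing of Fourier coefficients of the half-integral weight form $f$ into a question about non-vanishing of central values of quadratic twists of an integer weight $L$-function, and then appeal to known analytic non-vanishing results.

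First I would invoke the Shimura correspondence: it attaches to $f\in S_{k+1/2}^{(1)}(4r^2t,\chi_r\chi_{4t})$ a modular form $F=\mathrm{Sh}(f)$ of weight $2k$ on a congruence subgroup whose level divides $2r^2t$. When $k\geq 2$ any such $F$ is automatically cuspidal. When $k=1$ the lift lies in weight $2$, and the Serre--Stark classification pinpoints exactly those $f$ whose Shimura lift fails to be a cusp form: they are precisely the theta series of the form \eqref{theta}, which are excluded by hypothesis. So in every case $F$ is a nonzero cuspidal eigenform on $\mathrm{GL}(2)$.

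Next I would apply a Waldspurger / Kohnen--Zagier type identity: for a square-free $D$ coprime to $2r^2t$ and to the primes in $\mathcal{S}$, one extracts a relation of the form
\[
|a(f,D)|^2 = c(D)\cdot L(k,F,\chi_D),
\]
where $c(D)$ is an explicit nonzero factor (depending on local conditions at the ramified primes and on $D^{k-1/2}$) and $\chi_D$ is the quadratic character attached to $D$. In this way the non-vanishing of $a(f,D)$ becomes equivalent to the non-vanishing of the twisted central value $L(k,F,\chi_D)$. I would then invoke the Friedberg--Hoffstein style non-vanishing theorem: for a fixed cuspidal eigenform $F$ on $\mathrm{GL}(2)$ and any prescribed finite set of primes $\mathcal{S}$, there are infinitely many square-free $D$ with $\gcd(D,\ell)=1$ for all $\ell\in\mathcal{S}$ and $L(k,F,\chi_D)\neq 0$. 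Combining these two ingredients yields the claimed infinite family of square-free $D$ with $a(f,D)\neq 0$.

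The main obstacle I expect is the weight $3/2$ case $k=1$. Here the Shimura lift sits in weight $2$, and two aspects become delicate: the lift can fail to be cuspidal (rescued precisely by excluding the theta series \eqref{theta}), and the non-vanishing of central values of quadratic twists of weight $2$ cusp forms — with the additional restriction that $D$ avoid a prescribed finite set of primes — requires a more careful mollified-moment analysis than the standard Friedberg--Hoffstein argument. Making this work uniformly for all $k\geq 1$, and in particular in the weight $3/2$ setting that is critical for applications to weight $2$ paramodular forms, is the analytic heart of the argument.
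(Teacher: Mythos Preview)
The paper does not give its own proof of this statement; it is quoted verbatim as a result of Li \cite{li_half-integral} and then used as a black box in the proof of the main theorem. So there is no in-paper argument to compare your proposal against.

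That said, two remarks on your sketch. First, there is a genuine gap: after the Shimura lift you write ``So in every case $F$ is a nonzero cuspidal eigenform on $\GL(2)$'', and then apply a Waldspurger/Kohnen--Zagier identity $|a(f,D)|^2=c(D)\,L(k,F,\chi_D)$. But $f$ in the statement is an arbitrary nonzero cusp form, not assumed to be a Hecke eigenform, so $\mathrm{Sh}(f)$ need not be an eigenform and no such clean square-root formula is available. One has to first decompose $f$ as a linear combination of Hecke eigenforms and then argue that the square-free-supported coefficients of these eigenforms are linearly independent in a suitable sense; this is exactly the nontrivial part of Saha's proof of Theorem~\ref{thm:saha_half-int} for $k\geq 2$, and your outline skips it.

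Second, your proposed route via Friedberg--Hoffstein / mollified moments is essentially the method underlying Theorem~\ref{thm:saha_half-int} in the range $k\geq 2$. Li's contribution in \cite{li_half-integral}, which is what Theorem~\ref{thm:Li} records, handles the delicate case $k=1$ by a genuinely different technique: he realises the relevant weight~$3/2$ cusp forms via restrictions of coherent Hilbert Eisenstein series and reads off the non-vanishing from that construction, rather than from an $L$-value moment computation. So while your instinct about the obstacle (weight $3/2$, cuspidality of the lift, non-vanishing of weight~$2$ twists) is correct, the method Li actually uses to overcome it is not the analytic-number-theory argument you sketch.
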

\section{Non-vanishing of fundamental Fourier coefficients}
We state first our main theorem.

\begin{thm}
Let $F\in S_k(\Gp(N))$ be a non-zero paramodular cusp form of an arbitrary integer weight $k$ and odd square-free level $N$, which is an eigenfunction of the operators $T(p)+T(p^2)$ for primes $p\nmid N$, $U(p)$ for $p\mid N$ and $\mu_N$. Then $F$ has infinitely many non-zero fundamental Fourier coefficients.
\end{thm}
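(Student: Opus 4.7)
The plan is to follow the two-step strategy of Saha and Saha--Schmidt outlined in the introduction. \textbf{Step 1:} show that $F$ has a non-zero primitive Fourier coefficient. Assume for contradiction that $a(F,T) = 0$ whenever $\cont T = 1$. Using the explicit action of $U(p)$ on Fourier coefficients (Lemma \ref{lem:U(p)-action}) for $p \mid N$, and the Evdokimov-type formula for $T(p)+T(p^2)$ (Proposition \ref{Evdokimov}) for $p \nmid N$, the eigenvalue relations express $a(F,T)$ at a non-primitive index $T$ as a linear combination involving coefficients at matrices of strictly smaller content. Combined with the $\G^0(N)$-equivariance \eqref{eq:G^0(N)-equiv_of_Fourier_coeff} and the Fricke symmetry \eqref{eq:Fricke_on_coeff}, a descent on $\cont T$ should then propagate non-vanishing from any non-primitive coefficient down to a primitive one, producing the desired contradiction. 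In carrying this out I would aim for a primitive $T_0 = \mat{n_0}{r_0/2}{r_0/2}{m_0}$ with $m_0$ square-free --- forced by $N \mid m_0$ to be $N$ itself or a square-free multiple --- since Skoruppa's construction below requires square-free index.

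\textbf{Step 2:} Given such a $T_0$, the Fourier--Jacobi coefficient $\phi_{m_0}$ is a non-zero Jacobi form of weight $k$ and index $m_0$. Because $T_0$ is primitive, $\phi_{m_0}$ must have a non-zero component in $J_{k,m_0}^{1,f}$ for some square-free $f \mid m_0$. Choose a primitive Dirichlet character $\chi$ modulo $F \mid 2m_0$ with $\chi(-1)=(-1)^k$ and with odd-part-support equal to $f$; Theorem \ref{thm:skoruppa} then produces a non-zero cusp form $h \in S_{k-1/2}^{(1)}(4\,\mathrm{lcm}(m_0, F^2), \chi)$ whose $D$-th coefficient is a $\chi$-weighted sum of $a(F,T)$ over $T$ of discriminant $-D$. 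For $k \geq 3$ the level hypotheses of Theorem \ref{thm:saha_half-int} hold (since $N$, and hence $m_0$, is odd square-free), so $h$ has infinitely many non-zero odd square-free Fourier coefficients. For the paramodular-conjecture-relevant $k=2$ case I would instead invoke Theorem \ref{thm:Li}, after verifying that $h$ is not of the exceptional form \eqref{theta}; this is automatic because such thetas Shimura-lift to Eisenstein series, whereas $h$ originates from a cuspidal paramodular $F$. Restricting to $D \equiv 3 \pmod 4$ --- permissible via Li's coprimality freedom together with a standard quadratic twist --- makes $-D$ a fundamental discriminant, yielding infinitely many non-zero fundamental Fourier coefficients of $F$.

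\textbf{Main obstacle.} Step 2 is essentially a packaged assembly of Skoruppa and Saha/Li and should present no fundamentally new difficulty. The real technical heart is Step 1. For the Siegel congruence subgroup $\G_0^{(2)}(N)$ existence of a primitive coefficient could be harvested from theorems of Zagier, Yamana, or Ibukiyama--Katsurada, but there is no analogue at paramodular level. The proof therefore hinges on an explicit formula for $U(p)$ acting on paramodular Fourier coefficients (the genuine novelty of the paper, coming from the Roberts--Schmidt coset representatives), which I expect to involve a case split according to $p$-adic conditions on the entries of $T$. The descent on $\cont T$ must then be run with care: one has to ensure that at each stage the smaller-content coefficient extracted is non-zero, and that the iteration terminates at a primitive matrix whose lower-right entry is square-free. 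I expect this combinatorial bookkeeping to be the main sticking point.
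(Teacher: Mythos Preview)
Your overall two-step strategy matches the paper's, and Step~2 is essentially correct in shape. But there is a genuine gap in the transition from Step~1 to Step~2.

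\textbf{The missing ingredient.} The Hecke-relation descent---whether run on $\cont T$ as you propose, or on $|\disc T|$ as the paper actually does---produces \emph{some} primitive $T_0$ with $a(F,T_0)\neq 0$, but gives no control over the bottom-right entry $m_0$. You write that you would ``aim for'' $m_0$ square-free, but nothing in the descent arranges this, and Skoruppa's map in Theorem~\ref{thm:skoruppa} really does require square-free index. The paper handles this by a separate lemma (Lemma~\ref{Jacobi-Fourier}): starting from any primitive $S=\(\begin{smallmatrix} n & r/2 \\ r/2 & Nm \end{smallmatrix}\)$ with $a(F,S)\neq 0$, one first applies the Fricke symmetry \eqref{eq:Fricke_on_coeff} to swap the roles of $n$ and $m$, and then uses the $\G^0(N)$-equivariance \eqref{eq:G^0(N)-equiv_of_Fourier_coeff} under $A=\(\begin{smallmatrix} a & Nc \\ b & d \end{smallmatrix}\)$ to make the new bottom-right entry equal to $N(c^2Nm - cdr + d^2n)$. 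Since $\gcd(n,r,Nm)=1$, this binary quadratic form in $(c,d)$ is primitive, and Weber's theorem guarantees it represents infinitely many primes; choosing an odd prime $p\nmid N$ yields $\phi_{Np}\neq 0$ with $Np$ odd and square-free. This is the step your outline lacks.

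\textbf{Two smaller points.} First, your descent is phrased as ``express $a(F,T)$ as a combination of coefficients at strictly smaller content,'' but the eigenvalue relation $\lambda\, a(F,T)=a(F,pT)+\cdots$ contains $a(F,pT)$, which has \emph{larger} content; so this is not literally a content-descent. The paper instead argues by minimal $|\disc|$: if $S'=pT$ has minimal nonzero discriminant, then in the relation for $T$ every term except $a(F,pT)$ has discriminant dividing $\disc T$, hence vanishes, forcing $a(F,S')=0$---a contradiction. Second, for the $k=2$ case, the exclusion of the theta series \eqref{theta} is not argued directly from cuspidality of $F$; the paper invokes the Skoruppa--Zagier lift from Jacobi forms to elliptic modular forms, which agrees with the Shimura lift and preserves cuspidality.
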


As we mentioned in the introduction, the proof consists of two parts. First, using the assumption that $F$ is an eigenform of the operators $T(p)+T(p^2)$ and  $U(p)$, we deduce that $F$ has a non-zero primitive Fourier coefficient (Lemma \ref{primitive}). Thanks to this we may pick a non-zero Jacobi form of square-free index and use it to construct a non-zero modular form of half-integral weight that satisfies the assumptions of Theorems \ref{thm:saha_half-int}, \ref{thm:Li}, and thus implies existence of infinitely many non-zero fundamental Fourier coefficients. The second part is quite short because of the fact that Jacobi forms in the Fourier expansion of paramodular forms have level one, which makes them a fairly well understood object.

We start with computing the action of the $U(p)$ operator.
\begin{lem}\label{lem:U(p)-action}
Let $F\in S_k(\Gp(N))$ be a non-zero paramodular form and $p||N$ be a prime. If $F$ is an eigenform of the $U(p)$ operator with an eigenvalue $\lambda$, then the coefficients of $F$ satisfy the following equality:

\begin{align}\label{U(p)_action}
\lambda a(F,T) &= p^{-k+3} a(F,p\, T) +p^k a\( F, {1\over p}\, T\)\\ \nonumber
&\hspace{0.4cm}- a\( F, {1\over p}\(\begin{array}{cc} \alpha p & 1\\ -N\beta & p \\\end{array}\) T\(\begin{array}{cc} \alpha p & -N\beta \\ 1 & p\\\end{array}\)\)\\ \nonumber
\mbox{ (if $p|m$) }&\hspace{0.4cm}+ p\sum_{b\in\Z /p\Z} a\( F, {1\over p} \(\begin{array}{cc} 1 & b\\  & p\\\end{array}\) T\(\begin{array}{cc} 1 & \\ b & p\\\end{array}\) \)\\ \nonumber
\mbox{ (if $p|n$) }&\hspace{0.4cm}+ (-1)^k p\sum_{b\in\Z /p\Z} a\( F, {1\over p}\(\begin{array}{cc} p & \\ -bN & -1\\\end{array}\) T\(\begin{array}{cc} p & -bN\\  & -1\\\end{array}\)\) \\ \nonumber
\mbox{ (if $p|r$) }&\hspace{0.4cm}+ pa\( F, {1\over p}\(\begin{array}{cc} \alpha p & 1\\ -N\beta & p \\\end{array}\) T\(\begin{array}{cc} \alpha p& -N\beta \\ 1 & p\\\end{array}\)\)\, ,
\end{align}
where $T=\(\begin{smallmatrix} n & r/2\\ r/2 & mN\\\end{smallmatrix}\)$, and $\alpha ,\beta\in\Z$ are such that $\alpha p^2+\beta N=p$. (We take the convention $a\( F, {1\over p}\, X\) :=0$, if $p\nmid\cont X$.)
\end{lem}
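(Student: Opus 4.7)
The plan is to compute $F|_k U(p)$ directly from an explicit coset decomposition of the double coset $\Gp(N)\,\mathrm{diag}(1,1,p,p)\,\Gp(N)=\bigsqcup_i \Gp(N)\alpha_i$, expand each $F|_k\alpha_i$ as a Fourier series using \eqref{eq:Fourier_expansion} and \eqref{action}, and then equate the $T$-th Fourier coefficient of the resulting sum with that of $\lambda F$.

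First I would obtain a concrete set of coset representatives $\alpha_i$.  Since $p\|N$ this is a purely local question at $p$, and I would extract the representatives from the local calculations in Roberts--Schmidt \cite{NF}.  I expect the list to fall into a few natural families which will be in one-to-one correspondence with the terms on the right-hand side of \eqref{U(p)_action}: two ``diagonal'' representatives that rescale $Z$ to $pZ$ or $Z/p$ and will produce the clean terms $p^{-k+3}a(F,pT)$ and $p^k a(F,T/p)$; two families of ``translation'' representatives indexed by $b\in\Z/p\Z$ which will give rise to the two conditional sums in $b$; and one or two ``Atkin--Lehner-like'' representatives built from integers $\alpha,\beta$ satisfying $\alpha p^2+\beta N=p$, which will contribute the remaining terms involving conjugation by $\left(\begin{smallmatrix} \alpha p & 1 \\ -N\beta & p \end{smallmatrix}\right)$.

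Next, for each $\alpha_i=\left(\begin{smallmatrix} A_i & B_i \\ C_i & D_i \end{smallmatrix}\right)$ I would compute the Fourier expansion of $F|_k\alpha_i$ term by term.  Diagonal rescalings only modify $T$ by a factor of $p^{\pm 1}$ and contribute the characteristic powers of $p$ via $\mu(\alpha_i)^k\det(C_iZ+D_i)^{-k}$.  Translations $Z\mapsto Z+B_i$ multiply the $T$-th coefficient by $e(\tr(TB_i))$, so summing over $b\in\Z/p\Z$ forces certain entries of $T$ to lie in $p\Z$ --- this is precisely where the divisibility-dependent terms ``if $p\mid m$'', ``if $p\mid n$'', ``if $p\mid r$'' come from.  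The non-triangular representatives send $T$ to a conjugate by an element of $\GL_2(\Q)$; after applying the $\G^0(N)$-equivariance \eqref{eq:G^0(N)-equiv_of_Fourier_coeff} to put the resulting matrices into a standard form, I expect to recover exactly the conjugation by $\left(\begin{smallmatrix} \alpha p & 1 \\ -N\beta & p \end{smallmatrix}\right)$ stated in the lemma.

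Summing all contributions and equating with the $T$-th coefficient of $\lambda F$ will yield the identity \eqref{U(p)_action}.  The $(-1)^k$ factor in one of the sums will emerge from $\mu(\alpha_i)^k$ applied to a representative whose lower block has negative determinant, and the convention $a(F,T/p)=0$ for $p\nmid\cont T$ will absorb the case analysis into a single unified formula.  The main obstacle is purely bookkeeping --- no single step is deep, but tracking each multiplier, automorphy factor and translation consistently, and applying \eqref{eq:G^0(N)-equiv_of_Fourier_coeff} in just the right way to normalise the conjugated arguments, requires care; any misplaced sign or power of $p$ will be immediately visible in the final identity.
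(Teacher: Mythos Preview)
Your plan matches the paper's approach almost exactly: extract the coset representatives from Roberts--Schmidt \cite{NF}, lift them globally, expand each $F|_k\alpha_i$ as a Fourier series, and compare coefficients.

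One point deserves sharpening.  The coset decomposition from \cite{NF} produces four families, and the last two carry an extra factor $\left(\begin{smallmatrix} 1 & & & \\ & & & 1/N \\ & & 1 & \\ & -N & & \end{smallmatrix}\right)$ on the right, so these representatives are \emph{not} Siegel parabolic: their $C$-block is nonzero, the map $Z\mapsto (AZ+B)(CZ+D)^{-1}$ is genuinely fractional linear, and you cannot read off a Fourier expansion directly.  The $\G^0(N)$-equivariance \eqref{eq:G^0(N)-equiv_of_Fourier_coeff} acts only on the index $T$ of a coefficient already computed, so it will not help you here.  What the paper does instead is use invariance of $F$ under the full group $\Gp(N)$: for each offending representative $\alpha$ it constructs an explicit $g\in\Gp(N)$ such that $g\alpha$ is Siegel parabolic, and then $F|_k\alpha=F|_k(g\alpha)$ has an accessible Fourier expansion.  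The integers $\alpha,\beta$ with $\alpha p^2+\beta N=p$ enter precisely in building this $g$; the paper notes (and you should too) that such a $g$ fails to exist when $p^2\mid N$, which is exactly why the hypothesis $p\|N$ is needed.  Once you make this adjustment, the rest of your bookkeeping outline is accurate.
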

\begin{proof}
At the beginning of the proof we work locally, using the fact that 
$$\Gp (N)=\GSp_4 (\Q )\cap\GSp_4 (\R )^+\prod_p K(p^{\mathrm{ord}_p(N)})\, ,$$
where $\GSp_4 (\R )^+$ consists of matrices with a positive multiplier and 
$$K(p^n):= \Sp_4(\Q_p )\cap\(\begin{array}{cccc} \Z_p & p^n\Z_p & \Z_p & \Z_p \\ \Z_p & \Z_p & \Z_p & \Z_p/p^n\\ \Z_p & p^n\Z_p & \Z_p & \Z_p \\ p^n\Z_p & p^n\Z_p & p^n\Z_p & \Z_p \\ \end{array}\)$$ 
is a local analogue of $\Gp (p^n)$ at $p$.

Lemma 6.1.2 of \cite{NF} gives coset representatives at the place $p$ of the double coset defining the operator $U(p)$,\footnote{The coset representatives obtained in \cite{NF} are adjusted to our (classical) definition of $K(p)$.}
\begin{align*}
K(p)\(\begin{smallmatrix} I_2 & \\ & pI_2 \\\end{smallmatrix}\) K(p)
&= \bigsqcup_{a,b,c\in\Z/p\Z} K(p)\(\begin{smallmatrix} 1 &  &  &  \\  & 1 &  & \\  &  & p &  \\  &  &  & p \\ \end{smallmatrix}\)\(\begin{smallmatrix} 1 &  & a & b \\  & 1 & b & c/p\\  &  & 1 &  \\  &  &  & 1 \\ \end{smallmatrix}\)\\
&\hspace{0.5cm} \sqcup \bigsqcup_{a,c\in\Z/p\Z} K(p)\(\begin{smallmatrix} p &  &  &  \\  & 1 &  & \\  &  & 1 &  \\  &  &  & p \\ \end{smallmatrix}\)\(\begin{smallmatrix} 1 &  &  &  \\ -a & 1 &  & c/p \\  &  & 1 & a \\  &  &  & 1 \\ \end{smallmatrix}\)\\
&\hspace{0.5cm} \sqcup \bigsqcup_{a,b\in\Z/p\Z} K(p)\(\begin{smallmatrix} 1 &  &  &  \\  & 1 &  & \\  &  & p &  \\  &  &  & p \\ \end{smallmatrix}\)\(\begin{smallmatrix} 1 &  & a & b \\  & 1 & b & \\  &  & 1 &  \\  &  &  & 1 \\ \end{smallmatrix}\)\(\begin{smallmatrix} 1 &  &  &  \\  &  &  & 1/p \\  &  & 1 &  \\  & -p &  &  \\ \end{smallmatrix}\)\\
&\hspace{0.5cm} \sqcup \bigsqcup_{a\in\Z/p\Z} K(p)\(\begin{smallmatrix} p &  &  &  \\  & 1 &  & \\  &  & 1 &  \\  &  &  & p \\ \end{smallmatrix}\)\(\begin{smallmatrix} 1 &  &  &  \\ -a & 1 &  &  \\  &  & 1 & a \\  &  &  & 1 \\ \end{smallmatrix}\)\(\begin{smallmatrix} 1 &  &  &  \\  &  &  & 1/p \\  &  & 1 &  \\  & -p &  &  \\ \end{smallmatrix}\)
\end{align*}
In fact, we can exchange a matrix $\(\begin{smallmatrix} 1 &  &  &  \\  &  &  & 1/p \\  &  & 1 &  \\  & -p &  &  \\ \end{smallmatrix}\)$ above by $\(\begin{smallmatrix} 1 &  &  &  \\  &  &  & 1/N \\  &  & 1 &  \\  & -N &  &  \\ \end{smallmatrix}\)$, and that will give us the same coset representatives.
Moreover, at the place $q\neq p$, $K(q)\(\begin{smallmatrix} I_2 & \\ & pI_2 \\\end{smallmatrix}\) K(q)=K(q)$, so using Chinese remainder theorem, we can choose:
\begin{align*}
\Gp (N)\( \begin{smallmatrix} I_2 & \\ & pI_2 \\\end{smallmatrix}\) \Gp (N)&\\
&\hsp{2}=\bigsqcup_{a,b,c\in\Z/p\Z} \Gp (N)\( \begin{smallmatrix} 1 &  &  &  \\  & 1 &  & \\  &  & p &  \\  &  &  & p \\ \end{smallmatrix}\)\( \begin{smallmatrix} 1 &  & a & b \\  & 1 & b & c/p\\  &  & 1 &  \\  &  &  & 1 \\ \end{smallmatrix}\)\\
&\hsp{1.5} \sqcup \bigsqcup_{a,c\in\Z/p\Z} \Gp (N)\( \begin{smallmatrix} p &  &  &  \\  & 1 &  & \\  &  & 1 &  \\  &  &  & p \\ \end{smallmatrix}\)\( \begin{smallmatrix} 1 &  &  &  \\ -a & 1 &  & c/p \\  &  & 1 & a \\  &  &  & 1 \\ \end{smallmatrix}\)\\
&\hsp{1.5} \sqcup \bigsqcup_{a,b\in\Z/p\Z} \Gp (N)\( \begin{smallmatrix} 1 &  &  &  \\  & 1 &  & \\  &  & p &  \\  &  &  & p \\ \end{smallmatrix}\)\( \begin{smallmatrix} 1 &  & a & b \\  & 1 & b & \\  &  & 1 &  \\  &  &  & 1 \\ \end{smallmatrix}\)\( \begin{smallmatrix} 1 &  &  &  \\  &  &  & 1/N \\  &  & 1 &  \\  & -N &  &  \\ \end{smallmatrix}\)\\
&\hsp{1.5} \sqcup \bigsqcup_{a\in\Z/p\Z} \Gp (N)\( \begin{smallmatrix} p &  &  &  \\  & 1 &  & \\  &  & 1 &  \\  &  &  & p \\ \end{smallmatrix}\)\( \begin{smallmatrix} 1 &  &  &  \\ -a & 1 &  &  \\  &  & 1 & a \\  &  &  & 1 \\ \end{smallmatrix}\)\( \begin{smallmatrix} 1 &  &  &  \\  &  &  & 1/N \\  &  & 1 &  \\  & -N &  &  \\ \end{smallmatrix}\)\! .
\end{align*}

Using the invariance of $F$ under the action of the paramodular group $\Gp (N)$, the coset representatives of $\Gp (N)\(\begin{smallmatrix} I_2 & \\ & pI_2 \\\end{smallmatrix}\)\Gp (N)$ act on $F$ in the following way (unless stated otherwise, a matrix $T$ occurring in the summand is of the form $\(\begin{smallmatrix} n & r/2\\ r/2 & mN\\\end{smallmatrix}\)$):
\begin{align*}
F|_k & \bigsqcup_{a,b,c\in\Z/p\Z} \Gp (N)\( \begin{smallmatrix} 1 &  &  &  \\  & 1 &  & \\  &  & p &  \\  &  &  & p \\ \end{smallmatrix}\)\( \begin{smallmatrix} 1 &  & a & b \\  & 1 & b & c/p\\  &  & 1 &  \\  &  &  & 1 \\ \end{smallmatrix}\) (Z) \\
&= p^{-k}\sum_{a,b,c\in\Z/p\Z}F\( {1\over p} Z+{1\over p} \( \begin{smallmatrix} a & b\\ b & c/p\\\end{smallmatrix}\)\)\\
&= p^{-k}\sum_T a(F,T)e\(\tr \({1\over p} TZ\)\)\sum_{a,b,c\in\Z/p\Z}e\({na\over p} \) e\({rb\over p} \) e\({mNc\over p^2} \)\\
%&= p^{-k+3}\sum_{\substack{T\\p|n,m,r}}a(F,T)e\(\tr \({1\over p} TZ\)\)\\
&= p^{-k+3}\sum_T a(F,pT)e(\tr (TZ))\, ,\\
F|_k & \bigsqcup_{a,c\in\Z/p\Z} \Gp (N)\( \begin{smallmatrix} p &  &  &  \\  & 1 &  & \\  &  & 1 &  \\  &  &  & p \\ \end{smallmatrix}\)\( \begin{smallmatrix} 1 &  &  &  \\ -a & 1 &  & c/p \\  &  & 1 & a \\  &  &  & 1 \\ \end{smallmatrix}\) (Z)\\
&= \sum_{a,c\in\Z /p\Z}F\(\(\( \begin{smallmatrix} p & \\ -a & 1\\\ \end{smallmatrix}\) Z+\( \begin{smallmatrix} 0 & \\  & c/p\\\end{smallmatrix}\)\) {1\over p}\( \begin{smallmatrix} p & -a\\  & 1\\\end{smallmatrix}\)\)\\
&= \sum_T a(F,T)\sum_{a\in\Z/p\Z} e\(\tr\( {1\over p} \( \begin{smallmatrix} p & -a\\  & 1\\\end{smallmatrix}\) T\( \begin{smallmatrix} p & \\ -a & 1\\\end{smallmatrix}\) Z\)\)\\
&\hspace{0.5cm}\cdot\sum_{c\in\Z /p\Z} e\(\tr\( {1\over p} \( \begin{smallmatrix} p & -a\\  & 1\\\end{smallmatrix}\) T\( \begin{smallmatrix} 0 & \\  & c/p\\\end{smallmatrix}\)\)\)\\
&= p\sum_{\substack{T\\p|m}}\sum_{a\in\Z /p\Z} a\( F, {1\over p} \( \begin{smallmatrix} 1 & a\\  & p\\\end{smallmatrix}\) T\( \begin{smallmatrix} 1 & \\ a & p\\\end{smallmatrix}\) \) e(\tr (TZ))\, ,\\
F_{|_k} & \bigsqcup_{a,b\in\Z /p\Z} \Gp (N)\( \begin{smallmatrix} 1 &  &  &  \\  & 1 &  & \\  &  & p &  \\  &  &  & p \\ \end{smallmatrix}\)\( \begin{smallmatrix} 1 &  & a & b \\  & 1 & b & \\  &  & 1 &  \\  &  &  & 1 \\ \end{smallmatrix}\)\( \begin{smallmatrix} 1 &  &  &  \\  &  &  & 1/N \\  &  & 1 &  \\  & -N &  &  \\ \end{smallmatrix}\) (Z)\\
%&= \sum_{a,b\in\Z /p\Z} \( F_{|_k}\( \begin{smallmatrix} 1 &  &  &  \\  &  &  & 1/N \\  &  & 1 &  \\  & -N &  &  \\ \end{smallmatrix}\)\( \begin{smallmatrix} 1 & -bN & a &  \\  &  & b & 1/N \\  &  & p &  \\  & -pN &  &  \\ \end{smallmatrix}\)\) (Z)\\
&= \sum_{a,b\in\Z /p\Z} \( F_{|_k} \( \begin{smallmatrix} 1 & -bN & a &  \\  & -p &  & \\  &  & p &  \\  &  & -bN & -1 \\ \end{smallmatrix}\)\) (Z)\\
%&= (-1)^k\hsp{0.2}\sum_{a,b\in\Z /p\Z}\sum_T a(F,T)e\!\(\tr\!\( \( \begin{smallmatrix} p & \\ -bN & -1\\\end{smallmatrix}\)^{-1} T\(\( \begin{smallmatrix} 1 & -bN\\  & -p\\\end{smallmatrix}\) Z+\( \begin{smallmatrix} a & \\  & 0\\\end{smallmatrix}\)\)\)\)\\ 
&= (-1)^k\hsp{0.2}\sum_{b\in\Z /p\Z}\hsp{0.1}\sum_T a(F,T)e\!\(\tr\!\( \( \begin{smallmatrix} p & \\ -bN & -1\\\end{smallmatrix}\)^{-1} T\( \begin{smallmatrix} 1 & -bN\\  & -p\\\end{smallmatrix}\) Z\)\)\hsp{0.2}\sum_{a\in\Z /p\Z}\hsp{0.2} e\!\( {na\over p}\)\\
&= p(-1)^k\sum_{\substack{T\\ p|n}}\sum_{b\in\Z /p\Z} a\( F, {1\over p}\( \begin{smallmatrix} p & \\ -bN & -1\\\end{smallmatrix}\) T\( \begin{smallmatrix} p & -bN\\  & -1\\\end{smallmatrix}\)\) e(\tr (TZ))\, ,\\
F|_k & \bigsqcup_{a\in\Z /p\Z} \Gp (N)\( \begin{smallmatrix} p &  &  &  \\  & 1 &  & \\  &  & 1 &  \\  &  &  & p \\ \end{smallmatrix}\)\( \begin{smallmatrix} 1 &  &  &  \\ -a & 1 &  &  \\  &  & 1 & a \\  &  &  & 1 \\ \end{smallmatrix}\)\( \begin{smallmatrix} 1 &  &  &  \\  &  &  & 1/N \\  &  & 1 &  \\  & -N &  &  \\ \end{smallmatrix}\) (Z)\\
&= F|_k\( \begin{smallmatrix} p &  &  &  \\  &  &  & 1/N \\  &  & 1 &  \\  & -pN &  &  \\ \end{smallmatrix}\) (Z)+\sum_{a\in (\Z /p\Z )\ti} F|_k\( \begin{smallmatrix} p &  &  &  \\ -a &  &  & 1/N \\  & -aN & 1 &  \\  & -pN &  &  \\ \end{smallmatrix}\) (Z)\, .\\
\end{align*}
Before we can proceed further, we should investigate the case $a\not =0$. We want to construct a matrix $g\in\Gp (N)$ so that if we substitute $F|_k g$ in place of $F|_k$ and consider the action of the above coset representative, we will obtain a Siegel parabolic matrix\footnote{One can easily check that such a matrix $g$ does not exist if $p^2|N$.}. Let 
$\bar{a} :=a^{-1}\bmod\,\, p$ and $\alpha ,\beta\in\Z$ such that $\alpha p^2+\beta N=p$ (the existence of $\alpha ,\beta$ follows from the assumption that $p^2\nmid N$), and put
$$g:=\(\begin{array}{cccc} 1 &  & -\beta\bar{a} & \beta (a\bar{a} -1)/p \\ (a\bar{a} -1)/p & \bar{a} &  & -\alpha /N\\ aN/p & N & \alpha p & -\alpha a \\ Na & Np & -N\beta & N\beta a/p \\ \end{array}\) .$$
One can easily check that $g\in\Gp (N)$. Now that
$$g \(\begin{array}{cccc} p &  &  &  \\ -a &  &  & 1/N \\  & -aN & 1 &  \\  & -pN &  &  \\ \end{array}\) =\(\begin{array}{cccc} p & N\beta & -\beta\bar{a} &  \\ -1 & \alpha p &  & \bar{a}/N \\  &  & \alpha p & 1 \\  &  & -N\beta & p \\ \end{array}\) ,$$
we are ready to determine the action of the coset representatives of the last type on $F$. Namely, the terms above can be written as:
\begin{align*}
F|_k & \( \begin{smallmatrix} 1 &  &  &  \\  &  &  & -1/N \\  &  & 1 &  \\  & N &  &  \\ \end{smallmatrix}\)\( \begin{smallmatrix} p &  &  &  \\  &  &  & 1/N \\  &  & 1 &  \\  & -pN &  &  \\ \end{smallmatrix}\) (Z)+\sum_{a\in (\Z /p\Z )\ti} F|_k g\( \begin{smallmatrix} p &  &  &  \\ -a &  &  & 1/N \\  & -aN & 1 &  \\  & -pN &  &  \\ \end{smallmatrix}\) (Z)\\
&= F|_k\( \begin{smallmatrix} p &  &  &  \\  & p &  &  \\  &  & 1 &  \\  &  &  & 1 \\ \end{smallmatrix}\) (Z) +\sum_{a\in (\Z /p\Z )\ti} F|_k \( \begin{smallmatrix} p & N\beta & -\beta\bar{a} &  \\ -1 & \alpha p &  & \bar{a}/N \\  &  & \alpha p & 1 \\  &  & -N\beta & p \\ \end{smallmatrix}\) (Z)\\
%&= p^kF(pZ)+\sum_{a\in (\Z /p\Z )\ti} F\( \(\( \begin{smallmatrix} p & N\beta \\ -1 & \alpha p \\\end{smallmatrix}\) Z+\( \begin{smallmatrix}  -\beta\bar{a} &\\   & \bar{a}/N \\\end{smallmatrix}\)\)\( \begin{smallmatrix} \alpha p & 1\\ -N\beta & p \\\end{smallmatrix}\)^{-1} \) (Z)\\
&= p^kF(pZ)+ \sum_{a\in (\Z /p\Z )\ti} \sum_T a(F,T)e\(\tr\( \( \begin{smallmatrix} \alpha p & 1\\ -N\beta & p \\\end{smallmatrix}\)^{-1} T\( \begin{smallmatrix} p & N\beta \\ -1 & \alpha p \\\end{smallmatrix}\) Z\)\)\\
&\hspace{0.5cm}\cdot e\(\tr\( {\bar{a} \over p} \( \begin{smallmatrix} n & r/2\\ r/2 & mN \\\end{smallmatrix}\)\( \begin{smallmatrix}  -\beta &\\   & 1/N \\\end{smallmatrix}\)\( \begin{smallmatrix} p & -1\\ N\beta & \alpha p \\\end{smallmatrix}\)\)\)\\
&= p^kF(pZ)+ \sum_T a(F,T)e\(\tr\( \( \begin{smallmatrix} \alpha p & 1\\ -N\beta & p \\\end{smallmatrix}\)^{-1} T\( \begin{smallmatrix} p & N\beta \\ -1 & \alpha p \\\end{smallmatrix}\) Z\)\)\\
&\hspace{0.5cm} \cdot\sum_{a\in (\Z /p\Z )\ti} e\( {\bar{a}\beta r\over p}\)\\
&= p^k\sum_T a\( F,{1\over p} T\)e(\tr (TZ))\\
&\hspace{0.5cm} + \sum_T \sum_{a\in (\Z /p\Z )\ti} e\( {a\beta r\over p} \) a\( F, {1\over p}\( \begin{smallmatrix} \alpha p & 1\\ -N\beta & p \\\end{smallmatrix}\) T\( \begin{smallmatrix} \alpha p& -N\beta \\ 1 & p\\\end{smallmatrix}\)\) e(\tr (TZ))\, .
\end{align*}
Hence, because $F|_k U(p)=\lambda F$, we obtain the equality \eqref{U(p)_action}.
\end{proof}

Thanks to Lemma \ref{lem:U(p)-action} we will be able to prove that $F$ has a non-zero coefficient $a(F,T)$ with $\gcd(\cont T,N)=1$. To get a non-zero primitive Fourier coefficient, we need to investigate the action of Hecke operators at $p\nmid N$. It turns out that the following result due to Evdokimov will be enough\footnote{Evdokimov considered Siegel modular forms with respect to principal congruence subgroup, but the Hecke algebras coincide at primes not dividing $N$.}.

\begin{prop}[Evdokimov; \cite{evdokimov}]\label{Evdokimov}
Let $F\in S_k(\Gp(N))$. Assume that $F|_k T(p)+T(p^2)=\lambda F$. Then, using the notation of \cite{evdokimov}, the Fourier coefficients of $F$ satisfy the relation
\begin{align}\label{eq:Evdokimov}
\lambda a(F,T)&=a(F,pT)+p^{2k-3}a\( F, {1\over p} T\)\\\nonumber
&\hspace{0.4cm} +p^{k-2}\sum_{U\in R(N)\subseteq\Gamma_0(N)} a\( F, {1\over p} \( \begin{smallmatrix} 1&  \\  & p\\\end{smallmatrix}\) UT\T{U}\( \begin{smallmatrix} 1&  \\  & p\\\end{smallmatrix}\)\)\, .
\end{align}
\end{prop}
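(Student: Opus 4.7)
The plan is to carry out the direct analogue of Lemma \ref{lem:U(p)-action} at a prime $p\nmid N$, with $T(p)+T(p^2)$ in place of $U(p)$. The decisive simplification is that, since $p\nmid N$, the local component of $\Gp(N)$ at $p$ is all of $\Sp_4(\Z_p)$, which coincides with the local component of any other congruence subgroup of level dividing $N$, in particular the principal congruence subgroup used by Evdokimov. Hence the local Hecke algebras at $p$ are canonically identified, and Evdokimov's coset decompositions transfer verbatim to $\Gp(N)$ after a Chinese remainder theorem adjustment at primes dividing $N$, precisely as in the proof of Lemma \ref{lem:U(p)-action}.

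First I would invoke Evdokimov's explicit right coset representatives for $T(p)$ and $T(p^2)$, lifted by CRT to elements $\alpha_i\in\GSp_4(\Q)$ satisfying $\Gp(N)\,\mathrm{diag}(1,1,p,p)\,\Gp(N)=\bigsqcup_i\Gp(N)\alpha_i$ and analogously for $T(p^2)$. These fall naturally into three families: one producing the term $a(F,pT)$, a second producing $p^{2k-3}a(F,T/p)$, and a mixed Levi--unipotent family parametrised by $U\in R(N)\subseteq\Gamma_0(N)$ that produces the sum involving $\tfrac{1}{p}\(\begin{smallmatrix}1&\\&p\end{smallmatrix}\)UT\T{U}\(\begin{smallmatrix}1&\\&p\end{smallmatrix}\)$.

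Next I would carry out the substitution $Z\mapsto\alpha_i\cdot Z$ inside the Fourier expansion of $F$, exactly as in the $U(p)$ calculation. Each exponential sum over the unipotent entries of $\alpha_i$ either collapses under a divisibility condition on the entries of $T$, contributing a power of $p$ on its support, or vanishes identically. Collecting contributions family by family gives the three summands in \eqref{eq:Evdokimov}, and the identity then follows from the eigenform hypothesis $F|_k(T(p)+T(p^2))=\lambda F$ by comparing Fourier coefficients on both sides.

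The main obstacle is bookkeeping rather than conceptual: aligning the conventions and the parametrisation of $R(N)$ with those in \cite{evdokimov}, and tracking the precise powers of $p$ arising from the multiplier, the $|_k$-normalisation and the exponential sums. Because the computation is entirely local at $p$ and $p\nmid N$, no new ingredient beyond Evdokimov's argument is required; once the CRT-adjusted representatives are shown to fall into the same parametrisation, the formula is an immediate consequence of his calculation, which is why the statement can be given here as a direct quotation.
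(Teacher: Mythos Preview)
Your proposal is correct and matches the paper's treatment: the paper does not give a proof but simply cites Evdokimov, with a footnote observing that although Evdokimov works with the principal congruence subgroup, the Hecke algebras at primes $p\nmid N$ coincide, which is precisely your observation that the local component of $\Gp(N)$ at such $p$ is all of $\Sp_4(\Z_p)$. Your sketch of the CRT transfer and the Fourier computation is more explicit than anything in the paper, but the underlying justification is the same.
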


\begin{lem}\label{primitive}
Let $F\in S_k(\Gp(N))$ be a non-zero paramodular form of square-free level $N$ that is an eigenform of the operators $U(p)$ and $T(p)+T(p^2)$ for all primes $p$. Then there exists a primitive matrix $S$ for which $a(F,S)\not= 0$.
\end{lem}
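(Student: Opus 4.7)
The plan is to descend on the content of a non-vanishing Fourier coefficient. Define
$$c_0:=\min\{\cont T:a(F,T)\neq 0\},$$
and aim to show $c_0=1$. Assume $c_0\geq 2$ for contradiction, fix a prime $p\mid c_0$, and choose a minimizer $T_0=\(\begin{smallmatrix} n_0 & r_0/2\\ r_0/2 & m_0N\end{smallmatrix}\)$. I would apply the appropriate Hecke relation---Proposition~\ref{Evdokimov} if $p\nmid N$, or Lemma~\ref{lem:U(p)-action} if $p\mid N$---to the half-integral matrix $T_0/p$, and exploit the vanishing $\lambda\cdot a(F,T_0/p)=0$ (forced by the minimality of $c_0$, since $\cont(T_0/p)=c_0/p<c_0$) to derive $a(F,T_0)=0$, a contradiction.

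A preliminary reduction is needed when $p\mid N$: the input matrix $T_0/p$ must have lower-right entry divisible by $N$, equivalently $p\mid m_0$. Since the Fricke involution \eqref{eq:Fricke_on_coeff} preserves non-vanishing of Fourier coefficients up to sign and effectively swaps the roles of the $(1,1)$ and $(2,2)/N$ entries of $T_0$, either $\mu_N(T_0)$ directly contradicts the minimality of $c_0$, or it provides an alternative minimizer with $p\mid m_0$, after which one proceeds with the Hecke relation. (When $p\nmid N$ this step is unnecessary, since $p\mid c_0\mid m_0N$ automatically gives $p\mid m_0$.)

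The right-hand side of the Hecke relation applied to $T_0/p$ is a sum containing a non-zero scalar multiple of $a(F,T_0)$ (from the term $p\cdot(T_0/p)=T_0$) together with several auxiliary coefficients $a(F,T')$, where each $T'$ arises from $T_0/p$ by a further division by $p$ combined with conjugation by a matrix of determinant $1$ or $p$. A direct entry-wise computation should show that each such $T'$ either has $\cont T'<c_0$ (hence $a(F,T')=0$ by minimality) or fails to be a valid half-integral Fourier index of $F$ (hence $a(F,T')=0$ by the convention in \eqref{U(p)_action}). The remaining equation then forces $a(F,T_0)=0$, the desired contradiction.

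The principal technical obstacle lies in the content bookkeeping for the five families of matrices in \eqref{U(p)_action}, and particularly for the conjugations by the determinant-$p$ matrix $M=\(\begin{smallmatrix} \alpha p & 1\\ -N\beta & p\end{smallmatrix}\)$, which contribute with coefficients $-1$ and $+p$ depending on whether $p\mid r$; here one must carefully track $p$-adic valuations of each entry, splitting the analysis according to which of $p\mid n$, $p\mid r$, $p\mid m$ holds, and using the square-freeness of $N$ in an essential way. The analogous analysis for \eqref{eq:Evdokimov} is considerably simpler, since the auxiliary matrices $\tfrac{1}{p^2}W_pUT_0U^tW_p$ (with $W_p=\mathrm{diag}(1,p)$, $U\in R(N)\subseteq\Gamma_0(N)$) only involve $\SL_2(\Z)$-conjugation followed by conjugation by $\mathrm{diag}(1,p)$, whose effect on the content is straightforward to estimate.
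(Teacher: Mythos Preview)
Your overall strategy—descend via the Hecke relations \eqref{U(p)_action} and \eqref{eq:Evdokimov} applied at $T_0/p$—is the same as the paper's. However, the descent invariant you chose, namely the content, does \emph{not} do the job, and this is a genuine gap.

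Concretely, your key claim is that every auxiliary index $T'$ on the right-hand side satisfies $\cont T'<c_0$. This is false already for the Evdokimov term
\[
T'=\frac{1}{p}\begin{pmatrix}1&\\&p\end{pmatrix} U\,\frac{T_0}{p}\,{}^t\!U\begin{pmatrix}1&\\&p\end{pmatrix}.
\]
Writing $U(T_0/p)\,{}^t\!U=\begin{pmatrix} a & b/2\\ b/2 & c\end{pmatrix}$ (which has content $c_0/p$, since $U\in\SL_2(\Z)$ preserves content), one gets $T'=\begin{pmatrix} a/p & b/2\\ b/2 & pc\end{pmatrix}$, and the $p$-part of $\cont T'$ equals $\min\bigl(v_p(a)-1,\,v_p(b),\,v_p(c)+1\bigr)$. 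If $v_p(a)\ge 2$, $v_p(b)\ge 1$ and $v_p(c)=0$ (which is perfectly compatible with $\gcd(a,b,c)=c_0/p$), the content jumps back up to $c_0$. For instance with $p=3$, $c_0=3$, $T_0=\begin{pmatrix}27&9/2\\9/2&3\end{pmatrix}$ and $U=I$ one finds $T'=\begin{pmatrix}3&3/2\\3/2&3\end{pmatrix}$ with $\cont T'=3=c_0$, so minimality of $c_0$ says nothing about $a(F,T')$. The same phenomenon occurs for the determinant-$p$ conjugations in \eqref{U(p)_action}.

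The paper sidesteps this entirely by descending on $|\disc T|$ rather than $\cont T$. Since every matrix $M$ occurring in \eqref{U(p)_action} and \eqref{eq:Evdokimov} has $\det M\in\{1,p\}$, one has $\disc\!\bigl(\tfrac{1}{p}M(T_0/p)\,{}^t\!M\bigr)\in\{\disc(T_0)/p^2,\;\disc(T_0)/p^4\}$, so \emph{every} term in the relation other than $a(F,T_0)$ is indexed by a matrix of strictly smaller $|\disc|$. As $a(F,T_0)\neq 0$ appears with a non-zero scalar, some other term must be non-zero, contradicting minimality of $|\disc T_0|$. No entry-wise bookkeeping is needed. (Your argument can also be repaired by keeping the content invariant but breaking ties by $|\disc|$, though this is strictly more work than just using $|\disc|$ from the start.)

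Your Fricke reduction step—arranging $p\mid m_0$ when $p\mid N$ so that $T_0/p$ is a legitimate paramodular Fourier index—is correct and is in fact needed in the discriminant approach as well; the paper's write-up glosses over this point, so it is good that you flagged it.
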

\begin{proof}
This follows from the close observation of behaviour of Fourier coefficients under the action of operators $U(p)$ and $T(p)+T(p^2)$, relations \eqref{U(p)_action} and \eqref{eq:Evdokimov}. Let $\mathcal{A}$ be the set of matrices $S$ such that $a(F,S)\neq 0$. Let $S'$ be the matrix in $\mathcal{A}$ whose discriminant is smallest. We claim that $S'$ is primitive. If not, say $p\mid\cont S'$ and $S'=pT$, then, using the relations \eqref{U(p)_action} and \eqref{eq:Evdokimov}, we can find another matrix $S''\in\mathcal{A}$ whose discriminant is smaller than $\disc S'$. Indeed, note that every coefficient occurring in \eqref{U(p)_action} and \eqref{eq:Evdokimov}, except $a(F,pT)$, has a discriminant that divides $\disc T$. This leads to a contradiction.
\end{proof}

Now, having established the existence of a primitive matrix $S$ for which $a(F,S)$ is non-zero, we can move to the second part of the proof of our Theorem.

\begin{lem}\label{Jacobi-Fourier}
Let $F\in S_k(\Gp(N))$ be an eigenfunction of the $\mu_N$ operator. Assume that there is a primitive matrix $S=\(\begin{smallmatrix} n & r/2\\ r/2 & Nm\\\end{smallmatrix}\)$ such that $a(F,S)\neq 0$. Then there exists an odd prime $p$ not dividing $N$ for which $\phi_{Np}\neq 0$.
\end{lem}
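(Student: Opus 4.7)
My plan is to start from the assumed non-vanishing $a(F,S)\neq 0$, apply the Fricke symmetry, and then exploit the $\G^0(N)$-equivariance of Fourier coefficients to produce a non-vanishing coefficient whose lower-right entry has the form $Np$ for an odd prime $p\nmid N$. First I would use \eqref{eq:Fricke_on_coeff} to obtain
$$a(F,S')=\epsilon\,a(F,S)\neq 0,\qquad S':=\begin{pmatrix} m & -r/2\\ -r/2 & nN\end{pmatrix},$$
noting that $S'$ inherits primitivity and positive-definiteness from $S$ (same content, same discriminant).

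Next I would apply \eqref{eq:G^0(N)-equiv_of_Fourier_coeff} with a generic $A=\begin{pmatrix} a & Nb'\\ c & d\end{pmatrix}\in\G^0(N)$ (so $ad-Nb'c=1$). A direct computation shows that the $(2,2)$-entry of $\T{A}S'A$ equals $N\cdot Q(b',d)$, where
$$Q(x,y):=Nm\,x^2-r\,xy+n\,y^2.$$
The form $Q$ is positive-definite (its discriminant is $r^2-4Nmn=\disc S<0$ and its leading coefficient $Nm$ is positive) and primitive (its content $\gcd(Nm,r,n)$ equals $\cont S=1$ by the primitivity of $S$). Hence it suffices to find $(b',d)\in\Z^2$ with $\gcd(Nb',d)=1$ such that $Q(b',d)$ is an odd prime not dividing $N$.

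For this I would invoke the classical theorem that a primitive positive-definite binary quadratic form over $\Z$ represents infinitely many primes; in particular, $Q$ represents some odd prime $p\nmid N$. Given a representation $Q(b',d)=p$, primality forces $\gcd(b',d)=1$, and for each prime $q\mid N$ the congruence $p\equiv Q(b',d)\equiv d(nd-rb')\pmod{q}$ combined with $p\nmid N$ forces $q\nmid d$; hence $\gcd(d,N)=1$ and therefore $\gcd(Nb',d)=1$. Completing to $A\in\G^0(N)$ by choosing $a,c\in\Z$ with $ad-Nb'c=1$, the matrix $\T{A}S'A$ is half-integral, positive-definite, has lower-right entry $Np$, and satisfies $a(F,\T{A}S'A)=a(F,S')\neq 0$ by \eqref{eq:G^0(N)-equiv_of_Fourier_coeff}. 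This gives the non-vanishing $\phi_{Np}\neq 0$.

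The only non-routine ingredient is the classical input on primes represented by a primitive positive-definite binary quadratic form; everything else reduces to matrix multiplication and elementary congruence arguments that work cleanly because $N$ is odd and square-free and $S$ is primitive.
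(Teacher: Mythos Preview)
Your proof is correct and follows essentially the same route as the paper's: apply the Fricke symmetry \eqref{eq:Fricke_on_coeff} to pass to $S'$, then use $\G^0(N)$-equivariance \eqref{eq:G^0(N)-equiv_of_Fourier_coeff} so that the lower-right entry becomes $N\cdot Q(b',d)$ for the primitive form $Q(x,y)=Nmx^2-rxy+ny^2$, and invoke Weber's theorem to hit an odd prime $p\nmid N$. One harmless slip: $S'$ need not be primitive (its content is $\gcd(m,r,nN)$, which can differ from $\cont S$), but you never use this---the primitivity you actually need is that of $Q$, and your derivation $\gcd(Nm,r,n)=\cont S=1$ is correct.
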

\begin{proof}
We will use the properties (\ref{eq:G^0(N)-equiv_of_Fourier_coeff}) and (\ref{eq:Fricke_on_coeff}) of Fourier coefficients listed above. Let 
$$S':=\mat{m}{-r/2}{-r/2}{Nn}\quad\mbox{and}\quad A:=\mat{a}{Nc}{b}{d}\in\Gamma^0(N)\, .$$ Then 
$$a(F,\T{A}S'A)=a(F,S')=\epsilon a(F,S)\not =0$$
and the right bottom entry of $AS'\T{A}$ is equal to $N(c^2Nm-cdr+d^2n)$. Because $\gcd (n,r,Nm)=1$, the form $c^2Nm-cdr+d^2n$ represents infinitely many primes (\cite{weber82}). Let $c,d\in\Z$ be such that we obtain an odd prime $p$ not dividing $N$. Then $\gcd(cN,d)=1$, so we can find $a,b$ so that $A\in\SL_2(\Z )$. Hence, $\phi_{Np}\not =0$.
\end{proof}

After all that preparation, the proof of our Theorem will be very short:

\begin{proof}
First of all, recall that there are no paramodular cusp forms of weight $1$, because there are no Jacobi forms of weight $1$ (\cite[Satz 6.1]{skoruppathesis}, \cite[Theorem 7.1]{roberts-schmidt06}).

We know from Lemma \ref{primitive} and \ref{Jacobi-Fourier} that there exists an odd prime $p\nmid N$ such that $\phi_{Np}\not\equiv 0$. Without loss of generality, we may assume that $\phi_{Np}\in J^{1,f}_{k,Np}$ for some $f|Np$. Let $\chi=\prod_{q|f}\chi_q$ be a primitive Dirichlet character mod $f$ such that each character $\chi_q$ mod $q$ is odd. Then, by Theorem \ref{thm:skoruppa}, $h_{\chi}:=Z^{\chi}_{k,Np}(\phi_{Np})$ is a non-zero modular form in $S_{k-1/2}^{(1)}(4Npf,\chi)$. Hence, if only $h_{\chi}$ is not of the form \eqref{theta}, then Theorem \ref{thm:saha_half-int} and \ref{thm:Li} imply that there are infinitely many odd square-free $D$ for which $a(h_{\chi},D)\not =0$. For each such $D$ there exists $r$ such that $a\( F,\(\begin{smallmatrix} \frac{D+r^2}{4Np} & r/2\\ r/2 & Np\end{smallmatrix}\)\)\neq 0$.

It remains to prove that $h_{\chi}$ is not of the form \eqref{theta} or, equivalently, that Shimura lift of $h_{\chi}$ is not an Eisenstein series.
This in turn is equivalent to saying that a lift from Jacobi forms to elliptic modular forms which agrees with Shimura lifting preserves cuspidality. This is indeed the case for a map described in \cite[Theorem 5]{SZ}.
\end{proof}

\section*{Acknowledgements}
The work presented in this paper was carried out at the University of Bristol and represents a part of PhD thesis of the author. Her studies and research were possible thanks to a funding provided by EPSRC. The author would like to thank her supervisor Abhishek Saha for guidance, support and valuable remarks, and prof. N-P. Skoruppa for information on a version of Shimura lift for Jacobi forms.

\end{document}